%% file: main.tex
\documentclass[11pt]{article}

\usepackage[margin=1in]{geometry}
\usepackage{amsmath,amssymb,amsthm}
\usepackage{mathtools}
\usepackage{booktabs}
\usepackage{graphicx}
\usepackage{tikz}
\usetikzlibrary{arrows.meta,calc,positioning,matrix,fit}
\usepackage[T1]{fontenc}
\usepackage{lmodern}
\usepackage{microtype}
\usepackage[numbers,sort&compress]{natbib}
\usepackage{hyperref}
\hypersetup{
  pdftitle={A Conditional Rank-Count Theory for the Combinatorial Discretizable Distance Geometry Problem},
  pdfauthor={Michael Souza, Wagner da Rocha, Carlile Lavor},
  colorlinks=true,
  linkcolor=blue,
  citecolor=blue,
  urlcolor=blue
}

\theoremstyle{plain}
\newtheorem{theorem}{Theorem}[section]
\newtheorem{lemma}[theorem]{Lemma}
\newtheorem{proposition}[theorem]{Proposition}

\theoremstyle{definition}
\newtheorem{definition}[theorem]{Definition}
\newtheorem{assumption}[theorem]{Assumption}

\DeclareMathOperator{\PredCl}{PredCl}
\DeclareMathOperator{\rank}{rank}
\DeclareMathOperator{\supp}{supp}

\title{A Conditional Rank-Count Theory for the Combinatorial Discretizable Distance Geometry Problem}
\author{
Michael Souza\\
Universidade Federal do Cear\'a\\
\texttt{michael@ufc.br}
\and
Wagner da Rocha\\
Unicamp\\
\texttt{wdarocha@ime.unicamp.br}
\and
Carlile Lavor\\
Unicamp\\
\texttt{clavor@unicamp.br}
}
\date{}

\begin{document}

\maketitle

\begin{abstract}
The Combinatorial Discretizable Distance Geometry Problem combines a finite
binary lateration process with additional distance constraints. When
predecessor sets are not consecutive, these pruning constraints interact in
ways that make the symmetry-based counting methods available for molecular
instances insufficient. We establish an exact, dimension-uniform counting theorem for seed-fixed
feasible realizations under strict discretization and a generic feasible
framework assumption. Our approach identifies partial reflections through
seed-free connected components of the lateration skeleton. These reflections
are encoded by binary masks, while a labeled constraint matrix detects
combinations that preserve all pruning distances. The feasible branch choices
split into constrained choices generated by compatible partial reflections and
unconstrained choices outside the predecessor closure of the pruning
endpoints. The resulting count is obtained from graph operations and rank computations
over the binary field, without enumerating the lateration tree. Completeness
follows from the characterization of generic realizations of the relevant
joined graphs by partial reflections. The theorem applies in every Euclidean
dimension, including dimension one.
\end{abstract}

\medskip
\noindent\textbf{Keywords:} distance geometry; CDDGP; partial reflections;
finite-field linear algebra; realization counting; rigid graphs.

\noindent\textbf{MSC 2020:} 05C10, 51K05, 52C25, 68R10.

\input{sections/introduction}
\input{sections/related_works}
\input{sections/subproblems}
\input{sections/generators}
\input{sections/violations}
\input{sections/results}
\input{sections/example}
\input{sections/conclusion}

\bibliographystyle{plainnat}
\bibliography{references}

\end{document}

%% file: sections/introduction.tex
\section{Introduction}
\label{sec:intro}

Distance geometry asks for a realization of an edge-weighted graph in a fixed
Euclidean dimension. Given a simple graph \(G=(V,E)\), positive edge lengths
\(d:E\to\mathbb R_{>0}\), and \(K\ge1\), the goal is to find
\(x:V\to\mathbb R^K\) satisfying
\[
  \|x_u-x_v\|=d(\{u,v\})
  \qquad (\{u,v\}\in E).
\]
The problem underlies molecular conformation, sensor-network localization,
nanostructure reconstruction, and several inverse problems in data analysis
\cite{liberti2008branch,yemini1978positioning,mucherino2020analysis,
liberti2020distance}.

A prescribed vertex order can turn this continuous feasibility problem into a
finite one. In a discretizable order, an affinely independent seed of \(K\)
vertices is fixed, and each later vertex has distances to \(K\) earlier
predecessors. When those predecessors form a metric clique, their distances
determine an affine hyperplane, and the next vertex has two reflected
positions. Repeating this construction produces a binary lateration tree
\cite{mucherino2012discretizable,lavor2012discretizable}. The resulting class
is the Combinatorial Discretizable Distance Geometry Problem (CDDGP). The
remaining graph edges, called pruning edges, discard branch codes whose
realizations violate an additional distance.

For the molecular specialization, in which every predecessor set consists of
the \(K\) immediately preceding vertices, partial reflections act on suffixes
of the order. This nested structure explains the familiar power-of-two counts
and supports efficient symmetry-aware algorithms
\cite{mucherino2012exploiting,liberti2013counting,liberti2014number}. In the
CDDGP, predecessor sets need not be consecutive. A branch reflection can then
propagate through a noninterval set of descendants, and different reflection
mirrors can produce the same binary mask while interacting differently with
pruning edges. This is precisely where the suffix theory breaks down.

The present paper gives a graph-theoretic and linear-algebraic resolution. The
central construction removes a predecessor clique \(C\) from the constrained
lateration skeleton and uses its seed-free connected components as the atomic
sets that may be reflected across \(\operatorname{aff}(C)\). Each such
component defines a labeled binary mask. The mask matrix \(\mathsf M\) maps
selected component reflections to branch-code shifts, while a labeled
violation matrix \(\mathsf V\) records whether a pruning edge crosses a
component with a fixed endpoint outside its mirror clique. The binary space
\[
  \mathcal K=\mathsf M(\ker\mathsf V)
\]
therefore contains exactly the shifts certified by compatible partial
reflections.

Our contributions are as follows.
\begin{enumerate}
  \item We isolate the predecessor-closed subsystem containing every pruning
        endpoint and prove an exact product decomposition into constrained and
        free branch bits.
  \item We introduce component-reflection masks and a mirror-labeled violation
        matrix, and prove \emph{global stabilizer soundness}: every shift in
        \(\mathcal K\) preserves every pruning distance for every branch code
        on the strict-discretization parameter domain.
  \item We prove generic completeness in every dimension \(K\ge1\). The
        constrained lateration skeleton is a \(K\)-tree; after pruning edges
        are added, the graph is \(K\)-joined. A theorem of
        Garamv\"olgyi and Jord\'an then expresses every equivalent generic
        realization by partial \(K\)-reflections, whose branch masks we show
        belong to \(\mathcal K\).
  \item We derive the rank-count formula
        \[
          |\mathcal X|
          =2^{|B_F|+\rank_{\mathbb F_2}
          \left[\begin{smallmatrix}\mathsf M\\\mathsf V\end{smallmatrix}\right]
          -\rank_{\mathbb F_2}(\mathsf V)},
        \]
        where \(\mathcal X\) denotes seed-fixed feasible branch codes.
\end{enumerate}

The result is conditional only in the natural geometric sense required by the
partial-reflection theorem: the instance must satisfy strict discretization
and its constrained graph must admit a feasible realization with a generic
representative before seed normalization.

The paper is organized as follows. Section~\ref{sec:related} reviews the
counting and rigidity background. Section~\ref{sec:subproblems} defines the
CDDGP model, branch codes, and the constrained/free decomposition.
Sections~\ref{sec:generators} and~\ref{sec:violations} construct the masks and
prove soundness. Section~\ref{sec:results} establishes generic completeness
and the rank-count theorem. Section~\ref{sec:example} gives a compact worked
example and describes the accompanying exhaustive validation. Conclusions are
drawn in Section~\ref{sec:conclusion}.

%% file: sections/related_works.tex
\section{Related Work}
\label{sec:related}

\subsection{Discretization and counting}

The Branch-and-Prune paradigm constructs realizations by sequential
lateration and rejects a partial realization as soon as an available distance
constraint fails \cite{liberti2008branch,mucherino2012discretizable}. Its
performance is strongly influenced by the symmetry of the discretization
order. In the DMDGP, where the predecessors of vertex \(v_i\) are
\(v_{i-K},\ldots,v_{i-1}\), the two lateration choices are related by a
reflection that acts on the suffix beginning at \(v_i\). These suffix
reflections generate a binary symmetry group and lead to generic
power-of-two solution counts
\cite{mucherino2012exploiting,liberti2011number,liberti2013counting,
liberti2014number}. Particularly close to the present viewpoint is the
symmetry-based build-up algorithm of Gon\c{c}alves et
al.~\cite{goncalves2021new}. It decomposes a DMDGP instance into nested
subproblems indexed by pruning edges and selects compositions of partial
reflections that satisfy each new distance constraint. Its purpose is to find
a first realization efficiently, whereas our purpose is to count all generic
feasible codes; moreover, the CDDGP requires labeled component masks in place
of the nested suffix reflections available in the DMDGP.

The broader DDGP allows arbitrary earlier predecessor sets. Suitable orders
can be easier to recognize than contiguous DMDGP orders
\cite{cassioli2015discretization,abud2018k}, but the realization count can
depend on numerical edge lengths. Abud et al.~\cite{abud2024impossible} prove
that a weight-independent counting method is impossible for general DDGP
instances. Their positive subclass requires each predecessor set to induce a
clique; this is the CDDGP convention adopted here. The clique condition fixes
the predecessor metric before the next lateration step, but it does not restore
the nested suffix supports that make DMDGP counting immediate.

\subsection{Partial reflections and rigidity}

Partial reflections are a classical source of noncongruent equivalent
frameworks. Reflecting a union of components separated by a \(K\)-vertex set
across the affine hull of that separator preserves every edge whose endpoints
are moved together or whose fixed endpoint lies in the separator. In
discretized distance geometry, the same operation is the geometric mechanism
behind branch-code symmetries.

Our completeness proof uses the recent theory of \(d\)-joined graphs developed
by Garamv\"olgyi and Jord\'an \cite{garamvolgyi2024partial}. They prove that a
generic realization of a \(d\)-joined graph generates all equivalent
realizations, up to congruence, by reduced sequences of partial
\(d\)-reflections. They also show that every \(d\)-connected chordal graph is
\(d\)-joined and that edge addition preserves \(d\)-joinedness. These results
are particularly well matched to CDDGP: a predecessor-closed lateration
skeleton is a \(K\)-tree, hence chordal and \(K\)-connected, while pruning
constraints are exactly edge additions.

The role of rigidity theory here differs from global-rigidity certification.
We do not seek to eliminate all reflection ambiguities; those ambiguities are
the objects being counted. Instead, the \(K\)-joined characterization provides
a complete geometric generating set, and the binary matrices introduced below
identify the combinations compatible with the pruning edges.

\subsection{Algebraic representation}

Binary encodings of Branch-and-Prune paths and the action of DMDGP partial
reflections by addition in \(\mathbb F_2^n\) were developed by Liberti et
al.~\cite{liberti2014number}; the degrees of freedom of binary
representations for discretizable distance geometry were subsequently studied
by Mucherino~\cite{mucherino2020analysis}. These constructions exploit the
nested suffix structure of consecutive predecessor orders. In the broader
CDDGP setting, however, a binary mask alone does not identify the affine
mirror that realizes it: equal masks may arise from different separator
cliques and may have different effects on a pruning edge. Our labeled columns
therefore retain both the mask and its mirror clique. This is the essential
distinction from an unlabeled parity model: cancellations are meaningful only
within a common mirror label. The resulting construction turns geometric
compatibility into kernel and image computations over \(\mathbb F_2\), while
the \(K\)-joined theorem supplies the converse needed for exact counting.

%% file: sections/subproblems.tex
\section{CDDGP Model and Branch Codes}
\label{sec:subproblems}

Let the vertex order be \(v_1,\ldots,v_n\), and fix the seed
\(V_0=\{v_1,\ldots,v_K\}\) at affinely independent coordinates in
\(\mathbb R^K\). For each \(i>K\), let
\[
  C_i\subseteq\{v_1,\ldots,v_{i-1}\},
  \qquad |C_i|=K,
\]
be the prescribed predecessor set of \(v_i\). Define
\[
  E_0=E[V_0],
  \qquad
  E_D=\{\{u,v_i\}:i>K,\ u\in C_i\},
  \qquad
  G_D=(V,E_0\cup E_D).
\]
The remaining edges are the pruning edges
\[
  E_P=E\setminus(E_0\cup E_D),
  \qquad E=E_0\mathbin{\dot\cup}E_D\mathbin{\dot\cup}E_P.
\]

\begin{definition}[Skeletal CDDGP template]
\label{def:cddgp-template}
The ordered template is a \emph{skeletal CDDGP template} if
\(G_D[C_i]\) is a \(K\)-clique for every \(i>K\). Thus the complete metric
of each predecessor set is already present in the lateration skeleton, before
pruning edges are imposed.
\end{definition}

This distinction is important. Requiring \(C_i\) to be a clique only in the
full graph would allow a pruning edge to define part of the predecessor
metric, coupling branch creation and branch rejection. The skeletal convention
keeps these roles separate.

\begin{assumption}[Strict discretization (SD)]
\label{ass:sd}
For each \(i>K\), whenever the metric clique \(C_i\) is realized with its
prescribed distances, the \(K\) spheres centered at its vertices, with radii
\(d(\{u,v_i\})\), \(u\in C_i\), intersect in exactly two distinct points.
\end{assumption}

The two points in Assumption~\ref{ass:sd} are reflected across the hyperplane
spanned by the predecessor clique. The assumption is uniform over branch
choices: every prescribed predecessor metric produces two nondegenerate
candidates.

Let \(B=V\setminus V_0\) be the branch-vertex set. Choose an ordering
\((c_{i,1},\ldots,c_{i,K})\) of each \(C_i\). For a nondegenerate
lateration embedding \(x\), define
\[
  \Delta_i(x)=
  \det\bigl[
    x_{c_{i,2}}-x_{c_{i,1}},\ldots,
    x_{c_{i,K}}-x_{c_{i,1}},
    x_{v_i}-x_{c_{i,1}}
  \bigr].
\]
For \(K=1\), this is the signed displacement from the unique predecessor.
The branch bit \(b_i(x)\in\mathbb F_2\) records the sign of \(\Delta_i(x)\).

\begin{lemma}[Branch-code parametrization]
\label{lem:branch-parametrization}
Under the skeletal CDDGP convention and Assumption~\ref{ass:sd}, the branch
map is a bijection
\[
  \Phi:\mathbb F_2^B\longrightarrow
  \{\text{lateration embeddings of }G_D\text{ with }V_0\text{ fixed}\}.
\]
\end{lemma}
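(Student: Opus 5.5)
We will argue by induction along the vertex order, showing that each prefix map \(\Phi_i:\mathbb F_2^{\{v_{K+1},\ldots,v_i\}}\to\{\text{lateration embeddings of }G_D[v_1,\ldots,v_i]\text{ with }V_0\text{ fixed}\}\) is a bijection. By a lateration embedding we mean a map \(x\) satisfying every edge length in \(E_0\cup E_D\) and nondegenerate in the sense that each \(\Delta_i(x)\neq0\). The base case \(i=K\) is the single fixed seed placement, matched to the empty code.

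For the inductive step, fix a partial embedding \(x\) of \(v_1,\ldots,v_{i-1}\). The skeletal convention (Definition~\ref{def:cddgp-template}) guarantees that every edge of \(C_i\) lies in \(E_0\cup E_D\) and involves only earlier vertices. Hence \(x\) already realizes the clique \(C_i\) with its prescribed distances. Assumption~\ref{ass:sd} then says the \(K\) spheres about \(x_c\), \(c\in C_i\), meet in exactly two distinct points \(p^+,p^-\).

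Next we show these two points lie strictly on opposite sides of \(H=\operatorname{aff}(x(C_i))\), and that \(H\) is a hyperplane. If the points of \(C_i\) were affinely dependent, or if \(p^\pm\in H\), the sphere intersection would be symmetric under reflection across a hyperplane containing \(x(C_i)\) and the intersection would not be exactly two points. More precisely, the intersection would contain a positive-dimensional sphere, or a single point, or the empty set. For \(K=1\) the "spheres" are pairs \(\{x_c\pm d\}\), and distinctness gives \(d>0\) directly. Consequently \(p^-\) is the reflection of \(p^+\) across \(H\). The first \(K-1\) columns of \(\Delta_i\) span the direction space of \(H\), so \(\Delta_i\) is a nonzero affine-linear function of \(x_{v_i}\) that vanishes on \(H\). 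It therefore takes opposite nonzero signs at \(p^+\) and \(p^-\), and the branch bit \(b_i\) selects exactly one of the two points.

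Since the only constraints on \(v_i\) within \(G_D[v_1,\ldots,v_i]\) are the distances to \(C_i\), the extensions of \(x\) are exactly \(\{p^+,p^-\}\), and the bit distinguishes them. Hence \(\Phi_i\) is a bijection, using the induction hypothesis for the prefix. Taking \(i=n\) yields \(\Phi=\Phi_n\), well defined and bijective, with inverse \(x\mapsto(b_i(x))_{i\in B}\).

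The main obstacle is the nondegeneracy step: deducing affine independence of \(x(C_i)\) and \(p^\pm\notin H\) from "exactly two distinct points". We will handle it by the standard reduction. Subtracting sphere equations pairwise gives linear equations whose solution set is an affine flat orthogonal to the span of \(x(C_i)\). Intersecting this flat with one sphere yields a round sphere of dimension \(K-1-\dim\operatorname{aff}(x(C_i))\) centered in \(\operatorname{aff}(x(C_i))\). Exactly two points forces this to be a \(0\)-sphere, so \(\dim\operatorname{aff}(x(C_i))=K-1\) and the radius is positive, which gives both claims.
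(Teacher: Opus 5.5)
Your proposal is correct and takes the same route as the paper. You induct along the vertex order: the skeletal clique condition fixes the metric of \(C_i\), Assumption~\ref{ass:sd} supplies exactly two candidates, the sign of \(\Delta_i\) selects one, and the oriented-volume signs give the inverse. Your extra argument, that ``exactly two points'' forces \(x(C_i)\) to span a hyperplane with the two candidates strictly on opposite sides so that \(\Delta_i\) separates them, fills in a step the paper's short proof leaves implicit.
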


\begin{proof}
Proceed in the vertex order. Once \(C_i\) has been placed, its prescribed
metric is fixed because \(G_D[C_i]\) is a clique. Assumption~\ref{ass:sd}
gives two distinct candidates for \(v_i\), and the sign bit selects one
uniquely. This constructs an embedding for every code. If two codes first
differ at \(v_i\), they select distinct candidates from the same predecessor
placement, so the resulting embeddings differ. Conversely, the oriented-volume
signs recover the code of every lateration embedding.
\end{proof}

Define the feasible branch-code set
\[
  \mathcal X=
  \{s\in\mathbb F_2^B:
  \|\Phi_a(s)-\Phi_b(s)\|=d(\{a,b\})
  \text{ for all }\{a,b\}\in E_P\}.
\]
Counts in this paper are seed-fixed branch-code counts. We do not quotient
again by Euclidean congruence.

\subsection{Pruning closure and free bits}

For \(S\subseteq V\), let \(\PredCl(S)\) be the smallest set
\(T\supseteq S\) such that
\[
  v_i\in T,\ i>K \quad\Longrightarrow\quad C_i\subseteq T.
\]
Set
\[
  V_P=V_0\cup\PredCl(V(E_P)),
  \qquad B_P=V_P\setminus V_0,
  \qquad B_F=B\setminus B_P.
\]
The coordinates of every pruning endpoint depend only on the branch bits in
\(B_P\). Let \(\mathcal X_P\subseteq\mathbb F_2^{B_P}\) be the constrained
codes whose induced lateration embedding on \(V_P\) satisfies all pruning
edges. When \(E_P=\varnothing\), we use
\(V_P=V_0\), \(B_P=\varnothing\), and \(\mathcal X_P=\{0\}\).

\begin{proposition}[Free-bit factorization]
\label{prop:free-bits}
Under Assumption~\ref{ass:sd}, restriction to constrained and free bits gives
a bijection
\[
  \mathcal X\cong\mathcal X_P\times\mathbb F_2^{B_F}.
\]
Consequently,
\[
  |\mathcal X|=2^{|B_F|}|\mathcal X_P|.
\]
\end{proposition}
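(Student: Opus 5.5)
The plan is to show that the feasibility test defining \(\mathcal X\) depends only on the coordinates \(s|_{B_P}\), and that this dependence is exactly the one defining \(\mathcal X_P\). The product decomposition then follows immediately from the coordinate splitting \(\mathbb F_2^B=\mathbb F_2^{B_P}\times\mathbb F_2^{B_F}\).

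\textbf{Step 1: \(V_P\) is closed under predecessors.} We have \(\PredCl(V(E_P))\) closed by definition, and \(V_0\) imposes no requirement since only indices \(i>K\) are constrained. So for every \(v_i\in V_P\) with \(i>K\) we get \(C_i\subseteq V_P\).

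\textbf{Step 2: positions on \(V_P\) depend only on \(s|_{B_P}\).} We argue by induction along the vertex order, following the construction in the proof of Lemma~\ref{lem:branch-parametrization}. For \(v\in V_0\), \(\Phi_v(s)\) is the fixed seed coordinate. For \(v_i\in B_P\), \(\Phi_{v_i}(s)\) is determined by two pieces of data:
\begin{itemize}
  \item the placement of \(C_i\subseteq V_P\), which by induction depends only on \(s|_{B_P}\);
  \item the bit \(s_i\), with \(v_i\in B_P\).
\end{itemize}
Here Assumption~\ref{ass:sd} guarantees that the two candidates are well defined for every predecessor placement, so the recursion is uniform in \(s\). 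Hence the restriction of \(\Phi(s)\) to \(V_P\) equals the embedding of \(G_D[V_P]\) induced by the code \(s|_{B_P}\). This is precisely the embedding used to define \(\mathcal X_P\). Step 1 ensures that this induced embedding is itself a lateration embedding of the subsystem on \(V_P\).

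\textbf{Step 3: the bijection.} Every pruning edge \(\{a,b\}\in E_P\) has both endpoints in \(V(E_P)\subseteq V_P\). By Step 2, the condition \(\|\Phi_a(s)-\Phi_b(s)\|=d(\{a,b\})\) holds if and only if it holds for the induced embedding of \(s|_{B_P}\). Therefore
\[
s\in\mathcal X \iff s|_{B_P}\in\mathcal X_P,
\]
with no condition on \(s|_{B_F}\). The restriction map \(s\mapsto (s|_{B_P},s|_{B_F})\) is a bijection \(\mathbb F_2^B\to\mathbb F_2^{B_P}\times\mathbb F_2^{B_F}\), and by the equivalence above it carries \(\mathcal X\) onto \(\mathcal X_P\times\mathbb F_2^{B_F}\). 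Taking cardinalities gives \(|\mathcal X|=2^{|B_F|}|\mathcal X_P|\).

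\textbf{Degenerate case.} When \(E_P=\varnothing\), every code is feasible, \(B_P=\varnothing\), and \(\mathcal X_P=\{0\}\), so the formula reads \(|\mathcal X|=2^{|B|}\), consistent with Lemma~\ref{lem:branch-parametrization}.

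The only step requiring care is Step 2. We must make sure that the lateration of a vertex in \(B_P\) never consults a vertex outside \(V_P\), which is exactly the closure property from Step 1. We must also make sure that \(\mathcal X_P\) is defined through the same sign conventions \(c_{i,1},\ldots,c_{i,K}\), so that the induced code really is \(s|_{B_P}\).
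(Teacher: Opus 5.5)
Your proof is correct and follows essentially the same route as the paper. Both arguments rest on the predecessor-closedness of \(V_P\), which makes every pruning endpoint's position a function of \(s|_{B_P}\) alone. The only difference is that you read off the bijection from the coordinate splitting \(\mathbb F_2^B=\mathbb F_2^{B_P}\times\mathbb F_2^{B_F}\) via the global branch map of Lemma~\ref{lem:branch-parametrization}, whereas the paper rebuilds the free vertices as a unique extension of a constrained code; your explicit induction in Step 2 spells out the paper's one-line dependence claim.
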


\begin{proof}
Every pruning endpoint and every predecessor needed to place it lies in the
predecessor-closed set \(V_P\), so pruning feasibility depends only on
\(B_P\). Fix \(s_P\in\mathcal X_P\) and any free assignment
\(s_F\in\mathbb F_2^{B_F}\). Construct the vertices in \(B_F\) in the
inherited order. Their predecessors have already been placed, their clique
metric is prescribed, and Assumption~\ref{ass:sd} supplies two candidates;
the requested free bit selects one. This gives a unique full extension. No
pruning endpoint moves, so feasibility is preserved. The inverse map is
restriction to \(B_P\) and \(B_F\).
\end{proof}

%% file: sections/generators.tex
\section{Component Reflections and Binary Masks}
\label{sec:generators}

The constrained subsystem is the predecessor-closed graph on \(V_P\). Its
relevant mirror cliques are
\[
  \mathcal C_P=\{V_0\}\cup\{C_i:v_i\in B_P\}.
\]
Fix \(C\in\mathcal C_P\) and remove it from the constrained lateration
skeleton:
\[
  G_C=\bigl(V_P\setminus C,E_D[V_P\setminus C]\bigr).
\]
Write
\[
  \operatorname{cc}(G_C)
  :=\{A\subseteq V_P\setminus C:
    A\text{ is the vertex set of a connected component of }G_C\}.
\]

\begin{definition}[Relevant mirror components]
\label{def:mirror-components}
The relevant components for mirror \(C\) are
\[
  \widehat G_C=
  \{A\in\operatorname{cc}(G_C):
    A\cap V_0=\varnothing,\ B_P\cap A\ne\varnothing\}.
\]
Seed-containing components are anchored, whereas components disjoint from
\(B_P\) induce the zero constrained mask.
\end{definition}

Figure~\ref{fig:component-reflection} illustrates the geometric operation
behind this definition. The component \(A_1\) is reflected across the affine
span of the separator \(C\), while the seed and the other component \(A_2\)
remain fixed. An edge with both endpoints on the same side, or with its fixed
endpoint in \(C\), preserves its length; a crossing pruning edge with its
fixed endpoint outside \(C\) is a violation.

\begin{figure}[t]
  \centering
  \includegraphics[width=0.4\textwidth]{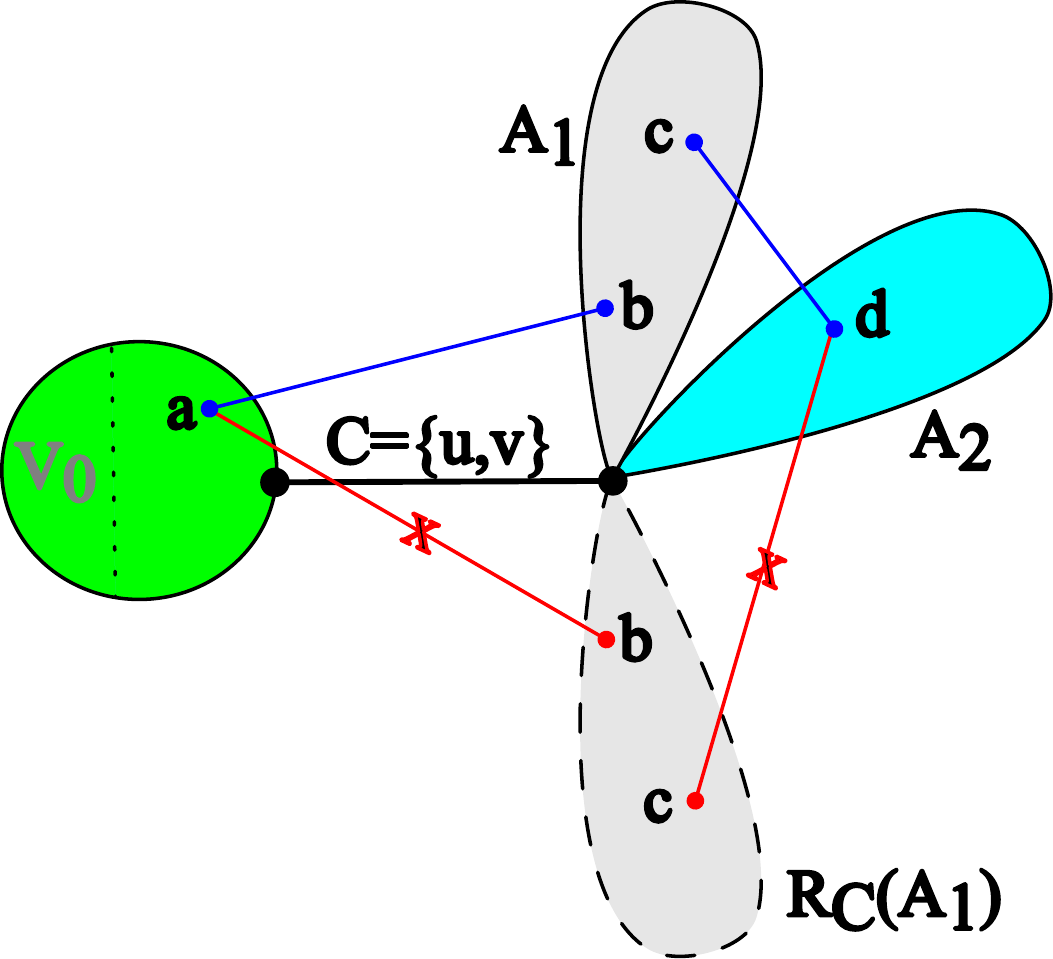}
  \caption{Component reflection across a separator \(C\). The blue segments
  represent distances before reflection and the red segments the corresponding
  tests after reflecting \(A_1\). The starred red segments are crossed pruning
  edges whose fixed endpoints lie outside \(C\); they are precisely the local
  violations recorded by the labeled matrix \(\mathsf V\).}
  \label{fig:component-reflection}
\end{figure}

For a lateration embedding \(x\), write
\(R_x^C\) for Euclidean reflection across
\(\operatorname{aff}_x(C)\). If \(S\subseteq V_P\setminus C\), let
\(R_S^C x\) denote the coordinate assignment obtained by applying
\(R_x^C\) to vertices of \(S\) and fixing all others.

\begin{lemma}[Structural preservation]
\label{lem:component-reflection}
Let \(S\) be a union of components of \(G_C\) with \(S\cap V_0=\varnothing\).
Then \(R_S^C x\) fixes the seed and preserves every edge in \(E_D[V_P]\).
\end{lemma}

\begin{proof}
The seed is fixed because \(S\cap V_0=\varnothing\). Consider
\(\{u,v\}\in E_D[V_P]\). If both endpoints lie in \(S\), the same isometry
is applied to both; if neither lies in \(S\), the edge is unchanged. If
exactly one endpoint lies in \(S\), the other cannot belong to
\(V_P\setminus C\), because \(S\) is a union of connected components of
\(G_C\). Hence the fixed endpoint lies in \(C\) and is fixed by the
reflection. In all cases the edge length is preserved.
\end{proof}

The next result identifies the corresponding branch-code change. It is the
reason for using oriented-volume signs rather than an arbitrary binary
labeling of the two candidates.

\begin{proposition}[Reflection-compatible labeling]
\label{prop:reflection-labeling}
Let \(C\in\mathcal C_P\), \(A\in\widehat G_C\), and let \(x\) be any
lateration embedding. Then
\[
  b_P(R_A^C x)\oplus b_P(x)=\mathbf1_{B_P\cap A}.
\]
\end{proposition}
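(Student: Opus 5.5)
We compare, for each \(v_i\in B_P\), the sign of \(\Delta_i(x)\) with that of \(\Delta_i(y)\), where \(y=R_A^C x\). The claim is that \(\Delta_i(y)=-\Delta_i(x)\) exactly when \(v_i\in A\), and \(\Delta_i(y)=\Delta_i(x)\) otherwise. Two facts drive this. First, \(\Delta_i\) is unchanged when all \(K+1\) points \(x_{c_{i,1}},\dots,x_{c_{i,K}},x_{v_i}\) are fixed. Second, \(\Delta_i\) changes sign when all of them are moved by one affine reflection, because the linear part of a reflection has determinant \(-1\). The points of \(C\) lie on \(\operatorname{aff}_x(C)\), so \(R_x^C\) fixes them. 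Hence moving only the vertices of \(A\) by \(R_x^C\) is the same as applying \(R_x^C\) to every vertex of \(A\cup C\). The task therefore reduces to a combinatorial one: show that for each \(i\), the clique \(Q_i=\{v_i\}\cup C_i\) satisfies either \(Q_i\subseteq A\cup C\) or \(Q_i\cap A=\varnothing\), and that the first case occurs (with \(Q_i\not\subseteq C\)) exactly when \(v_i\in A\).

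\emph{Step 1 (cliques do not split).} In the skeletal template, \(G_D[C_i]\) is a clique and \(v_i\) is adjacent to every vertex of \(C_i\) through \(E_D\), so \(Q_i\) is a \((K+1)\)-clique in \(G_D[V_P]\); predecessor closure of \(V_P\) gives \(Q_i\subseteq V_P\). The vertices of \(Q_i\setminus C\) are pairwise adjacent in \(G_C\), so they lie in a single component. Since \(|Q_i|=K+1>|C|\), this set is nonempty. Consequently either \(Q_i\setminus C\subseteq A\) or \(Q_i\cap A=\varnothing\). In the second case every point of \(Q_i\) is fixed, so the sign is unchanged. In the first case every point of \(Q_i\) is either reflected or lies on the mirror, so \(\Delta_i(y)=-\Delta_i(x)\). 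Both values are nonzero by Assumption~\ref{ass:sd}, so the sign bit is well defined.

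\emph{Step 2 (the flip happens exactly when \(v_i\in A\)).} If \(v_i\in A\), we are in the flipping case. The only problematic configuration is \(v_i\in C\) together with \(Q_i\setminus C\subseteq A\), which would flip a bit outside \(A\). To exclude it, let \(v_m\) be the earliest vertex of \(A\). Since \(A\cap V_0=\varnothing\), we have \(m>K\), and \(C_m\subseteq V_P\). Any vertex of \(C_m\) outside \(C\) is adjacent to \(v_m\) in \(G_C\), so it would belong to \(A\) and precede \(v_m\), which is impossible. Hence \(C_m\subseteq C\), and \(C_m=C\) because both sets have size \(K\). Thus every vertex of \(C\) precedes every vertex of \(A\). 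Now suppose \(v_i\in C\); then all of \(C_i\) precedes \(v_i\), hence precedes all of \(A\), so \(Q_i\cap A=\varnothing\) and there is no flip. Combining Steps~1 and~2 gives, coordinatewise on \(B_P\), \(b_i(y)\oplus b_i(x)=[v_i\in A]\), which is the claimed identity.

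The main obstacle is Step 2, specifically ruling out that a mirror vertex \(v_i\in C\) has its predecessors inside \(A\). The ordering argument through the earliest vertex of \(A\) handles this. It relies on \(A\) being seed-free, which is exactly the hypothesis \(A\in\widehat G_C\). I would also note in passing that \(y\) is again a lateration embedding of \(G_D[V_P]\), by Lemma~\ref{lem:component-reflection} together with Lemma~\ref{lem:branch-parametrization}, so \(b_P(y)\) is meaningful.
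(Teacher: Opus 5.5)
Your proof is correct and takes essentially the paper's route: the earliest vertex of \(A\) forces \(C_m=C\), so every mirror vertex precedes \(A\), and each local simplex \(C_i\cup\{v_i\}\) with \(v_i\in B_P\) either lies in \(A\cup C\) (sign flips) or misses \(A\) (sign unchanged), with the case \(v_i\in C\) handled by this ordering fact. One small imprecision is harmless: \(G_C\) uses only \(E_D\)-edges, so two seed vertices of \(Q_i\setminus C\) need not be adjacent in \(G_C\), but when \(v_i\notin C\) the set \(Q_i\setminus C\) is connected through the star at \(v_i\) (as in the paper), and when \(v_i\in C\) your Step~2 gives \(Q_i\cap A=\varnothing\) directly.
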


\begin{proof}
Let \(v_j\) be the smallest-index vertex of \(A\). No predecessor of
\(v_j\) lies in \(A\), by minimality. A predecessor outside \(A\cup C\)
would be adjacent to \(v_j\) in \(G_C\), contradicting that \(A\) is a
component. Thus \(C_j\subseteq C\), and equality follows from
\(|C_j|=|C|=K\). In particular, every vertex of \(C\) precedes every vertex
of \(A\), so no vertex of \(C\) has a predecessor in \(A\).

Now fix \(v_i\in B_P\). If \(v_i\in A\), every predecessor in \(C_i\)
lies in \(A\cup C\); otherwise a structural edge would join \(A\) to a
different component of \(G_C\). Hence the simplex
\(C_i\cup\{v_i\}\) is transformed by the same orientation-reversing
reflection, with vertices in \(C\) fixed. Therefore
\(\Delta_i(R_A^C x)=-\Delta_i(x)\), and the bit flips.

If \(v_i\notin A\cup C\), no predecessor of \(v_i\) lies in \(A\), or
\(v_i\) would belong to the same component. Its local simplex is unchanged.
If \(v_i\in C\cap B_P\), the order argument above gives
\(C_i\cap A=\varnothing\), so its simplex is also unchanged. Exactly the bits
in \(B_P\cap A\) flip.
\end{proof}

\begin{definition}[Mirror masks and mask matrix]
\label{def:mask-matrix}
For \(C\in\mathcal C_P\) and \(A\in\widehat G_C\), define the labeled
mirror mask
\[
  m_A^C=\mathbf1_{B_P\cap A}\in\mathbb F_2^{B_P}.
\]
Let
\[
  \mathcal M=\{m_A^C:C\in\mathcal C_P,\ A\in\widehat G_C\}
\]
be an indexed family: equal vector values with different labels remain
distinct columns. The mask matrix is the linear map
\[
  \mathsf M:\mathbb F_2^{\mathcal M}\longrightarrow\mathbb F_2^{B_P}
\]
whose column indexed by \((C,A)\) is \(m_A^C\). Thus
\[
  \mathsf M\alpha=
  \bigoplus_{\substack{C\in\mathcal C_P,\ A\in\widehat G_C\\
                       \alpha_A^C=1}}m_A^C.
\]
\end{definition}

%% file: sections/violations.tex
\section{Labeled Violations and Global Soundness}
\label{sec:violations}

A component reflection can alter a pruning distance only when its support
contains exactly one endpoint. Even then, the distance is preserved when the
fixed endpoint belongs to the mirror clique. This motivates a violation test
that retains the mirror label.

\begin{definition}[Labeled violation matrix]
\label{def:violation-matrix}
For \(e=\{a,b\}\in E_P\), the labeled mask \(m_A^C\) \emph{violates} \(e\)
if
\[
  |e\cap\supp(m_A^C)|=1
  \qquad\text{and}\qquad
  e\setminus\supp(m_A^C)\not\subseteq C.
\]
Let
\[
  \mathcal R=
  \{(e,C):e\in E_P,\ C\in\mathcal C_P,
    \widehat G_C\ne\varnothing\}.
\]
The labeled violation matrix
\[
  \mathsf V:\mathbb F_2^{\mathcal M}\longrightarrow
  \mathbb F_2^{\mathcal R}
\]
has entries
\[
  \mathsf V_{(e,C),m_A^{C'}}=1
  \quad\Longleftrightarrow\quad
  C=C'\text{ and }m_A^{C'}\text{ violates }e.
\]
\end{definition}

Rows with different mirror labels are deliberately distinct. Two component
reflections across the same mirror can cancel a crossing parity; crossings
associated with different affine mirrors have no such common geometric
interpretation.

Define the kernel-compatible shift space
\[
  \mathcal K=\mathsf M(\ker\mathsf V)
  \subseteq\mathbb F_2^{B_P}.
\]

\begin{lemma}[Kernel compatibility]
\label{lem:kernel-compatibility}
Let \(\alpha\in\ker\mathsf V\). For each \(C\in\mathcal C_P\), set
\[
  S_C=\bigcup_{\substack{A\in\widehat G_C\\\alpha_A^C=1}}A.
\]
Every pruning edge crossed by \(S_C\) has its fixed endpoint in \(C\).
\end{lemma}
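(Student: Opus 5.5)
The plan is to fix a mirror \(C\in\mathcal C_P\) and a pruning edge \(e=\{a,b\}\in E_P\) crossed by \(S_C\), so that exactly one endpoint, say \(a\), lies in \(S_C\) and \(b\notin S_C\). The goal is then to show \(b\in C\), by contradiction: suppose \(b\notin C\), and deduce that the row \((e,C)\) of \(\mathsf V\alpha\) equals \(1\), which contradicts \(\alpha\in\ker\mathsf V\).

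The first step is to observe that the components in \(\widehat G_C\) are pairwise disjoint subsets of \(V_P\setminus C\). Hence \(S_C\) is a disjoint union, and \(a\) lies in exactly one selected component \(A_0\) with \(\alpha^C_{A_0}=1\). The endpoint \(b\) lies in no selected component. Next, I would compute the \((e,C)\) entry of \(\mathsf V\alpha\). By Definition~\ref{def:violation-matrix}, only columns with label \(C\) contribute, so the entry is \(\bigoplus_{A:\alpha^C_A=1}[m_A^C \text{ violates } e]\).

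For a selected \(A\ne A_0\), the support \(\supp(m_A^C)=B_P\cap A\) contains neither \(a\) nor \(b\): \(a\) is in \(A_0\), and \(b\) is in no selected component. So such an \(A\) does not violate \(e\).

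For \(A_0\), I need \(a\in\supp(m_{A_0}^C)=B_P\cap A_0\). This holds because \(a\in V(E_P)\subseteq V_P\) and \(a\notin V_0\), since \(A_0\cap V_0=\varnothing\); hence \(a\in B_P\). Also \(b\notin A_0\). So \(|e\cap\supp|=1\) and \(e\setminus\supp=\{b\}\not\subseteq C\) by assumption, and \(A_0\) violates \(e\). The entry is therefore exactly \(1\).

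Finally, I would check that the row \((e,C)\) actually exists in \(\mathcal R\). This requires \(\widehat G_C\ne\varnothing\), which holds since \(A_0\in\widehat G_C\). This gives the contradiction, so \(b\in C\).

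The only delicate point is the bookkeeping that no other selected component with the same label can cancel the parity. This rests on disjointness of components together with \(b\notin S_C\), and on the membership \(a\in B_P\). I expect no real obstacle beyond stating these carefully. I would also note explicitly that "crossed" means exactly one endpoint lies in \(S_C\), matching Lemma~\ref{lem:component-reflection}'s usage.
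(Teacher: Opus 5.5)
Your proof is correct and follows the paper's argument. You assume the fixed endpoint satisfies $b\notin C$, use disjointness of the selected components to see that an odd number (in fact exactly one) of the selected $C$-labeled masks violates $e$, and conclude $(\mathsf V\alpha)_{(e,C)}=1$, contradicting $\alpha\in\ker\mathsf V$; your added checks that $a\in B_P$ and that the row $(e,C)$ lies in $\mathcal R$ are details the paper leaves implicit.
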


\begin{proof}
Suppose \(e=\{a,b\}\) is crossed by \(S_C\), with \(a\in S_C\) and
\(b\notin S_C\). Because the selected components are disjoint, the binary sum
of their masks contains exactly one endpoint of \(e\); hence an odd number of
selected \(C\)-masks cross \(e\). If \(b\notin C\), every such crossing is a
violation in row \((e,C)\), so
\((\mathsf V\alpha)_{(e,C)}=1\), contradicting
\(\alpha\in\ker\mathsf V\). Therefore \(b\in C\).
\end{proof}

To state soundness uniformly, fix a combinatorial template and let \(\Theta\)
be any common parameter domain on which Assumption~\ref{ass:sd} holds for all
branch codes. Write \(\Phi^\theta\) for the corresponding branch map and,
for \(e=\{a,b\}\in E_P\), define
\[
  Q_e^\theta(s)=
  \|\Phi_a^\theta(s)-\Phi_b^\theta(s)\|^2.
\]

\begin{theorem}[Global stabilizer soundness]
\label{thm:soundness}
For every \(k\in\mathcal K\), \(s\in\mathbb F_2^B\),
\(e\in E_P\), and \(\theta\in\Theta\),
\[
  Q_e^\theta(s\oplus k)=Q_e^\theta(s),
\]
where \(k\) is extended by zero on \(B_F\). Consequently, if
\(s^\ast\) is feasible, then
\[
  s_P^\ast\oplus\mathcal K\subseteq\mathcal X_P.
\]
\end{theorem}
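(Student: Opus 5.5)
The plan is to realize every shift $k\in\mathcal K$ as a composition of partial reflections acting on the constrained embedding. Then I will read off both the branch-code change and the pruning distances from results already proved.

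Fix $\theta\in\Theta$ and $s\in\mathbb F_2^B$. Since $k$ vanishes on $B_F$ and every pruning endpoint lies in $V_P$, the quantity $Q_e^\theta(s)$ depends only on $s_P$ through the restricted embedding $x=\Phi^\theta(s)|_{V_P}$. By the same inductive construction as in Lemma~\ref{lem:branch-parametrization}, applied to the predecessor-closed set $V_P$, this restriction is the branch map for $B_P$. It therefore suffices to prove that $x$ and the constrained embedding with code $s_P\oplus k$ have the same pruning distances. Write $k=\mathsf M\alpha$ with $\alpha\in\ker\mathsf V$, and split $\alpha$ by mirror label, so that
\[
  k=\bigoplus_{C\in\mathcal C_P}k_C,\qquad
  k_C=\bigoplus_{A\in\widehat G_C,\ \alpha_A^C=1}m_A^C=\mathbf 1_{B_P\cap S_C}.
\]
The last equality holds because the selected components are disjoint. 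Here $S_C$ is the set from Lemma~\ref{lem:kernel-compatibility}.

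Next, enumerate the mirrors $C^{(1)},\dots,C^{(r)}$ carrying nonzero $\alpha$ in any order. Define $x^{(0)}=x$ and $x^{(t)}=R^{C^{(t)}}_{S_{C^{(t)}}}x^{(t-1)}$, where each reflection is taken across the affine hull of $C^{(t)}$ in the \emph{current} embedding $x^{(t-1)}$. For a single step I will argue as follows.
\begin{enumerate}
  \item By Lemma~\ref{lem:component-reflection}, $x^{(t)}$ fixes the seed and preserves every edge of $E_D[V_P]$. Hence every predecessor clique keeps its prescribed metric and every vertex keeps its prescribed distances to its predecessors. By Assumption~\ref{ass:sd} at parameter $\theta$, each $v_i$ therefore sits at one of the two lateration candidates. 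So $x^{(t)}$ is again a lateration embedding and equals $\Phi^\theta$ of a unique code.
  \item To identify that code, I reflect the components of $S_{C^{(t)}}$ one at a time. This is legitimate because $C^{(t)}$ is disjoint from every component, so its affine hull is unchanged throughout. Each single reflection is then covered by Proposition~\ref{prop:reflection-labeling} applied to the current embedding, which may be any lateration embedding. Hence the code changes exactly by $k_{C^{(t)}}$.
  \item For pruning edges I use Lemma~\ref{lem:kernel-compatibility}, which is purely combinatorial and thus valid at every step regardless of the current coordinates. An edge with both endpoints inside or both outside $S_{C^{(t)}}$ keeps its length, because a single isometry, or the identity, acts on both endpoints. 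A crossed edge has its fixed endpoint in $C^{(t)}$, which lies on the mirror, so its length is also preserved.
\end{enumerate}

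After $r$ steps, the code of $x^{(r)}$ is $s_P\oplus\bigoplus_t k_{C^{(t)}}=s_P\oplus k$, and every $Q_e^\theta$ is unchanged. This proves $Q_e^\theta(s\oplus k)=Q_e^\theta(s)$. The consequence follows directly: if $s^\ast$ is feasible, then every $s_P^\ast\oplus k$ satisfies all pruning edges, so $s_P^\ast\oplus\mathcal K\subseteq\mathcal X_P$.

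I expect the main obstacle to be step 1: justifying that the intermediate configurations remain honest lateration embeddings, so that Proposition~\ref{prop:reflection-labeling} and the bijection $\Phi^\theta$ can be reapplied after mirrors of other labels have moved. The key point I would stress is that Assumption~\ref{ass:sd} holds uniformly for all branch codes on $\Theta$. Because of this, preservation of $E_D[V_P]$ alone forces the reflected configuration back into the image of $\Phi^\theta$. Independence from the order of mirrors is then automatic, since the total code change is the $\mathbb F_2$-sum of the $k_C$.
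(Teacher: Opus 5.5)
Your proposal is correct and follows essentially the same route as the paper: reflect each $S_C$ one mirror label at a time across the current $\operatorname{aff}_x(C)$, use Lemma~\ref{lem:component-reflection} and Lemma~\ref{lem:kernel-compatibility} to preserve the skeleton and the pruning distances, and use Proposition~\ref{prop:reflection-labeling} to accumulate the code change $\mathsf M\alpha=k$. Your explicit use of Assumption~\ref{ass:sd} fills in a step the paper leaves implicit: it certifies that every intermediate configuration is again a lateration embedding, so the labeling proposition and the bijection $\Phi^\theta$ can be reapplied.
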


\begin{proof}
Choose \(\alpha\in\ker\mathsf V\) with \(k=\mathsf M\alpha\). Starting from
\(x=\Phi^\theta(s)\), process the nonempty sets \(S_C\) one mirror label at
a time. At the current embedding, reflect \(S_C\) across
\(\operatorname{aff}_x(C)\). Lemma~\ref{lem:component-reflection} preserves
the lateration skeleton, and Lemma~\ref{lem:kernel-compatibility} ensures that
every crossed pruning edge has its fixed endpoint in \(C\). Thus every pruning
edge length is preserved: either neither endpoint moves, both move under the
same isometry, or the unique fixed endpoint lies on the mirror.

By Proposition~\ref{prop:reflection-labeling}, the accumulated branch-code
change is
\[
  \bigoplus_{\substack{C\in\mathcal C_P,\ A\in\widehat G_C\\
                       \alpha_A^C=1}}m_A^C
  =\mathsf M\alpha=k.
\]
The final constrained embedding therefore agrees with
\(\Phi^\theta(s\oplus k)\) at every pruning endpoint, and all pruning squared
distances equal their initial values. If \(s^\ast\) is feasible, those values
are the prescribed lengths, proving the inclusion.
\end{proof}

\begin{definition}[Relative feasible shifts]
\label{def:relative-shifts}
For a feasible reference code \(s^\ast\), define
\[
  \mathcal I_P(s^\ast)=s_P^\ast\oplus\mathcal X_P.
\]
Soundness is the inclusion \(\mathcal K\subseteq\mathcal I_P(s^\ast)\).
Completeness is the reverse inclusion.
\end{definition}

%% file: sections/results.tex
\section{Generic Completeness and the Rank Formula}
\label{sec:results}

We first isolate the linear-algebraic dimension that enters the count. For
linear maps represented over \(\mathbb F_2\), vertical concatenation is denoted
by \(\bigl[\begin{smallmatrix}\mathsf M\\\mathsf V\end{smallmatrix}\bigr]\).

\begin{lemma}[Dimension of the compatible shift space]
\label{lem:rank-formula}
The space \(\mathcal K=\mathsf M(\ker\mathsf V)\) satisfies
\[
  \dim\mathcal K=
  \rank_{\mathbb F_2}
  \begin{bmatrix}\mathsf M\\\mathsf V\end{bmatrix}
  -\rank_{\mathbb F_2}(\mathsf V).
\]
\end{lemma}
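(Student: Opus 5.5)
The identity is pure linear algebra over \(\mathbb F_2\) and needs no geometry, so I will prove it by applying rank--nullity twice. Put \(N=|\mathcal M|\), the number of labeled columns, so that \(\mathsf M\), \(\mathsf V\) and the stacked map
\[
  \begin{bmatrix}\mathsf M\\\mathsf V\end{bmatrix}:\mathbb F_2^{\mathcal M}\longrightarrow\mathbb F_2^{B_P}\times\mathbb F_2^{\mathcal R}
\]
all share the domain \(\mathbb F_2^{\mathcal M}\).

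\emph{Step 1.} Apply rank--nullity to the restriction \(\mathsf M|_{\ker\mathsf V}:\ker\mathsf V\to\mathbb F_2^{B_P}\). Its image is \(\mathcal K\) by definition, and its kernel is \(\ker\mathsf M\cap\ker\mathsf V\). Hence
\[
  \dim\mathcal K=\dim\ker\mathsf V-\dim(\ker\mathsf M\cap\ker\mathsf V).
\]

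\emph{Step 2.} Identify the intersection as a single kernel. A vector \(\alpha\) satisfies \(\mathsf M\alpha=0\) and \(\mathsf V\alpha=0\) exactly when the stacked vector \((\mathsf M\alpha,\mathsf V\alpha)\) vanishes. So
\[
  \ker\mathsf M\cap\ker\mathsf V=\ker\begin{bmatrix}\mathsf M\\\mathsf V\end{bmatrix}.
\]

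\emph{Step 3.} Apply rank--nullity on \(\mathbb F_2^{\mathcal M}\) to both \(\mathsf V\) and the stacked map:
\[
  \dim\ker\mathsf V=N-\rank_{\mathbb F_2}(\mathsf V),
  \qquad
  \dim\ker\begin{bmatrix}\mathsf M\\\mathsf V\end{bmatrix}=N-\rank_{\mathbb F_2}\begin{bmatrix}\mathsf M\\\mathsf V\end{bmatrix}.
\]
Substituting these into Step 1, the two copies of \(N\) cancel, leaving
\[
  \dim\mathcal K=\rank_{\mathbb F_2}\begin{bmatrix}\mathsf M\\\mathsf V\end{bmatrix}-\rank_{\mathbb F_2}(\mathsf V),
\]
which is the claim.

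There is no real obstacle; only bookkeeping needs care. Because the columns of \(\mathsf M\) are indexed by labels, a repeated mask value contributes two distinct coordinates to the domain \(\mathbb F_2^{\mathcal M}\), and \(N\) must count them separately. This is harmless, since \(N\) cancels in Step 3.

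Two degenerate cases should also be checked. If \(\mathcal R=\varnothing\), then \(\mathsf V\) is the zero map into the trivial space, \(\ker\mathsf V=\mathbb F_2^{\mathcal M}\), and the formula reduces to \(\dim\mathcal K=\rank_{\mathbb F_2}\mathsf M\), which is correct. If \(\mathcal M=\varnothing\), every rank is \(0\), and \(\mathcal K=\{0\}\) has dimension \(0\), again consistent.
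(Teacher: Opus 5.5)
Your proof is correct and is essentially the paper's own argument: rank--nullity for $\mathsf M$ restricted to $\ker\mathsf V$, then the identification $\ker\mathsf M\cap\ker\mathsf V=\ker\bigl[\begin{smallmatrix}\mathsf M\\\mathsf V\end{smallmatrix}\bigr]$, then rank--nullity for $\mathsf V$ and for the stacked map, after which $|\mathcal M|$ cancels. Your remarks on labeled repeated columns and on the degenerate cases $\mathcal R=\varnothing$ and $\mathcal M=\varnothing$ are harmless additions that the paper leaves implicit.
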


\begin{proof}
Restrict \(\mathsf M\) to \(\ker\mathsf V\). Rank--nullity gives
\[
 \dim\mathsf M(\ker\mathsf V)
 =\dim\ker\mathsf V-\dim(\ker\mathsf M\cap\ker\mathsf V).
\]
The two terms on the right are, respectively,
\(|\mathcal M|-\rank\mathsf V\) and
\(|\mathcal M|-\rank[\begin{smallmatrix}\mathsf M\\\mathsf V\end{smallmatrix}]\).
Subtracting proves the claim.
\end{proof}

We now state the genericity condition needed for completeness. Assume
\(E_P\ne\varnothing\), put
\[
  L=(V_P,E_0\cup E_D[V_P]),
  \qquad H=(V_P,E(L)\cup E_P),
\]
and regard both graphs as graphs in dimension \(K\).

\begin{assumption}[Generic feasible framework (GP)]
\label{ass:gp}
The constrained graph \(H\) has a feasible realization that is congruent to
a generic framework in \(\mathbb R^K\).
\end{assumption}

Genericity in Assumption~\ref{ass:gp} is imposed before seed normalization.
Indeed, prescribed seed coordinates cannot themselves be algebraically
generic. An isometry may subsequently place the \(K\) affinely independent
seed points at their prescribed positions without changing edge lengths.

\begin{theorem}[Dimension-uniform generic completeness]
\label{thm:generic-completeness}
Let \(K\ge1\). Under the skeletal CDDGP convention and
Assumptions~\ref{ass:sd} and~\ref{ass:gp}, every feasible reference code
\(s^\ast\) satisfies
\[
  \mathcal I_P(s^\ast)=\mathcal K,
  \qquad
  \mathcal X_P=s_P^\ast\oplus\mathcal K.
\]
If \(E_P=\varnothing\), the same conclusion holds with
\(B_P=\varnothing\) and \(\mathcal K=\{0\}\), without Assumption~\ref{ass:gp}.
\end{theorem}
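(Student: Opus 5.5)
Soundness (Theorem~\ref{thm:soundness}) already gives \(\mathcal K\subseteq\mathcal I_P(s^\ast)\), so only the reverse inclusion remains. The case \(E_P=\varnothing\) is immediate: then \(B_P=\varnothing\), \(\mathcal X_P=\{0\}\), \(\mathcal C_P=\{V_0\}\), and every component of \(G_{V_0}\) meeting \(B_P\) is vacuous. Hence \(\mathcal K=\{0\}=\mathcal I_P(s^\ast)\). Assume now \(E_P\ne\varnothing\), and fix feasible codes \(s^\ast\) and \(t\in\mathcal X_P\). Let \(x=\Phi(s^\ast)|_{V_P}\) and \(y=\Phi(t)|_{V_P}\); both are realizations of \(H\) with the same edge lengths. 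The goal is to show \(s^\ast_P\oplus t\in\mathcal K\).

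\textbf{Step 1 (graph structure).} I will show that \(L\) is a \(K\)-tree. Build \(L\) from the clique \(V_0\) by adding each \(v_i\in B_P\) in order, joined to \(C_i\). Predecessor-closure of \(V_P\) ensures \(C_i\subseteq V_P\), and the skeletal convention makes \(C_i\) a \(K\)-clique. So \(L\) is a \(K\)-tree, hence chordal and \(K\)-connected, and therefore \(K\)-joined by Garamv\"olgyi--Jord\'an. Since edge addition preserves \(K\)-joinedness, \(H\) is \(K\)-joined as well.

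\textbf{Step 2 (reflection decomposition).} By Assumption~\ref{ass:gp}, \(H\) has a feasible realization that is congruent to a generic one. I will argue that \(x\) may be taken as this generic representative. The theorem will then express \(y\) as \(g\circ\rho_r\circ\dots\circ\rho_1(x)\), where \(g\) is an isometry and each \(\rho_j\) is a partial \(K\)-reflection preserving all edges of \(H\). There is a gap here, because the generic realization need not have code \(s^\ast\). I would close it by first applying the argument to the generic representative \(z\), with code \(s^z\). This gives \(s^z_P\oplus s^\ast_P\in\mathcal K\) and \(s^z_P\oplus t\in\mathcal K\), and since \(\mathcal K\) is a subspace, \(s^\ast_P\oplus t\in\mathcal K\).

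\textbf{Step 3 (reflections to labeled masks).} I expect this to be the main obstacle. A partial \(K\)-reflection of \(H\) reflects a union \(S\) of components of \(H-C\) across \(\operatorname{aff}(C)\) for some \(K\)-separator \(C\) of \(H\). Such an \(S\) is also a union of components of \(L-C=G_C\). In a \(K\)-tree, a \(K\)-clique separator is one of the cliques \(V_0\) or \(C_i\), so \(C\in\mathcal C_P\).

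The isometry \(g\) can be normalized so that both the seed and the reflected-side sets avoid \(V_0\). To do this, replace \(S\) by its complement when \(S\) contains seed vertices; the two choices differ only by the global reflection, which \(g\) absorbs. Since \(y\) fixes the seed, the final \(g\) is the identity. This uses affine independence of \(V_0\) and that \(V_0\) lies in one component side.

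Each step then corresponds to a vector \(\alpha^{(j)}\) supported on the label \(C\), with the selected components \(A\in\widehat G_C\). By Proposition~\ref{prop:reflection-labeling}, the code change is \(\mathsf M\alpha^{(j)}\). Components disjoint from \(B_P\) contribute zero, and sequential application does not change the combinatorial component structure.

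\textbf{Step 4 (kernel membership).} Because \(S\) is a union of components of \(H-C\), no pruning edge joins \(S\) to \(V_P\setminus(S\cup C)\). Every crossing pruning edge therefore has its fixed endpoint in \(C\), so \(\mathsf V\alpha^{(j)}=0\). Summing over \(j\) gives \(\alpha=\sum_j\alpha^{(j)}\in\ker\mathsf V\) with \(\mathsf M\alpha=s^\ast_P\oplus t\) (or the analogous shift from \(s^z\) in Step~2). Hence \(t\in s^\ast_P\oplus\mathcal K\), which completes the proof.
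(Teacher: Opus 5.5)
Your route is the paper's: $L$ is a $K$-tree, so $H$ is $K$-joined; the Garamv\"olgyi--Jord\'an theorem supplies partial reflections; complementary fragments make every reflected side seed-free; and the generic representative $z$ together with the subspace property of $\mathcal K$ transfers the result to any feasible reference. There is, however, a genuine gap. Step~3 ends with a false claim: ``since $y$ fixes the seed, the final $g$ is the identity.''

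After normalization $g$ fixes the $K$ affinely independent seed points. In $\mathbb R^K$ these span only a hyperplane, so $g$ is either the identity or the reflection $R$ across $\operatorname{aff}(V_0)$. This is the same two-fold ambiguity that creates the first branch bit. The second case genuinely occurs, and it need not come from any partial reflection. Take $K=1$, $C_2=\{v_1\}$, $C_3=\{v_2\}$ and $E_P=\{\{v_1,v_3\}\}$. Then $H$ is a triangle with no $1$-separator, so the reflection sequence is empty and your argument returns $\alpha=0$. Yet $R(z)$ is a seed-fixed feasible embedding with code $s^z_P\oplus\mathbf 1_{B_P}\ne s^z_P$. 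As written, your Step~4 therefore identifies the code shift only up to an unaccounted $\mathbf 1_{B_P}$.

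The missing ingredient is the paper's residual-congruence step, which shows directly that $\mathbf 1_{B_P}\in\mathcal K$. Let $\alpha_0$ select every component of $G_{V_0}$. These components lie in $V_P\setminus V_0=B_P$, so all are relevant and $\mathsf M\alpha_0=\mathbf 1_{B_P}$. In row $(e,V_0)$ there are three cases:
\begin{itemize}
\item a pruning edge inside one component is not violated;
\item a pruning edge incident with $V_0$ has its fixed endpoint on the mirror, so it is not violated;
\item a pruning edge joining two distinct components is violated by exactly two selected columns, which cancel over $\mathbb F_2$.
\end{itemize}
Hence $\mathsf V\alpha_0=0$, and when $g=R$ you simply add $\alpha_0$.

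Two smaller points. First, the same pairwise cancellation is the unstated step in your Step~4. $\mathsf V$ has one column per component of $G_C$, not one per reflected side $S$. So ``every crossing edge of $S$ has its fixed endpoint in $C$'' gives $\mathsf V\alpha^{(j)}=0$ only after you note that a pruning edge between two different selected components contributes two cancelling entries in row $(e,C)$. Second, your assertion that every $K$-separator of the $K$-tree is $V_0$ or some $C_i$ is true but needs its one-line proof. The smallest-index vertex $v_i$ of a seed-free component of $H-C$ has all its predecessors in $C$, so $C_i=C$.
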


\begin{proof}
The case \(E_P=\varnothing\) follows directly from the conventions in
Section~\ref{sec:subproblems}; assume henceforth that \(E_P\ne\varnothing\).
Predecessor closure lists the vertices of \(B_P\) in the inherited order with
all predecessors already present. The first such vertex attaches to \(V_0\),
and each later vertex attaches to the existing \(K\)-clique \(C_i\).
Consequently, \(L\) is a \(K\)-tree and hence is \(K\)-connected and chordal.
It is therefore \(K\)-joined by \cite[Theorem~4.12]{garamvolgyi2024partial};
adding the pruning edges preserves this property by
\cite[Theorem~4.7]{garamvolgyi2024partial}. Thus \(H\) is \(K\)-joined.

Let \(C\) be a \(K\)-vertex separator of \(H\). Since \(V_0\) is a clique,
at most one component of \(H-C\) contains a seed vertex outside \(C\), so
\(H-C\) has a seed-free component \(Q\). If \(v_i\) is the smallest-index
vertex of \(Q\), none of its predecessors lies in \(Q\), and none lies in
another component of \(H-C\), because it is structurally adjacent to \(v_i\).
Hence \(C_i\subseteq C\), and cardinality gives \(C=C_i\in\mathcal C_P\).

Every realization equivalent to a generic realization of a \(K\)-joined
graph is congruent to one obtained by a reduced sequence of partial
\(K\)-reflections \cite[Theorem~4.5]{garamvolgyi2024partial}. By replacing a
fragment with its complementary fragment when necessary
\cite[Lemmas~3.5--3.6]{garamvolgyi2024partial}, each reflected side \(Y\) may
be chosen seed-free, with \(N_H(Y)=C\). Since no structural edge joins \(Y\)
to \(V_P\setminus(Y\cup C)\), the set \(Y\) is a union of relevant components
of \(G_C\). Let \(\alpha_Y\) select precisely those labeled columns. Then
\[
  \mathsf M\alpha_Y=\mathbf1_Y.
\]
Rows of \(\mathsf V\) with a different mirror label vanish. For label \(C\),
a pruning edge whose endpoints lie in two selected components contributes two
violations, which cancel over \(\mathbb F_2\); an edge within one component
contributes none. Hence a nonzero row would require a pruning edge from \(Y\)
to \(V_P\setminus(Y\cup C)\), contrary to \(N_H(Y)=C\). Thus
\(\mathsf V\alpha_Y=0\), and every partial-reflection shift belongs to
\(\mathcal K\).

There is one possible residual congruence. Selecting all components of
\(G_{V_0}\) gives \(\mathsf M\alpha_0=\mathbf1_{B_P}\). A pruning edge within
one component is not crossed; an edge between two components contributes two
violations, which cancel; and an edge incident with \(V_0\) has its fixed
endpoint on the mirror. Hence \(\mathsf V\alpha_0=0\), so the global
reflection across \(\operatorname{aff}(V_0)\) also has its shift in
\(\mathcal K\).

Let \(p\) be the generic feasible representative supplied by GP and normalize
its seed by an isometry. Any other seed-fixed feasible constrained embedding
is equivalent to \(p\), so the preceding characterization expresses it by a
sequence of partial reflections followed by a congruence. The sequence fixes
the seed and has total branch shift in \(\mathcal K\). A Euclidean congruence
fixing the \(K\) affinely independent seed points is either the identity or
reflection across their affine hull; the latter contributes
\(\mathbf1_{B_P}\in\mathcal K\). Therefore
\(\mathcal X_P\subseteq s_P^g\oplus\mathcal K\). Global soundness
(Theorem~\ref{thm:soundness}) gives the reverse inclusion, and hence
\(\mathcal X_P=s_P^g\oplus\mathcal K\). For any feasible reference
\(s^\ast\), this equality implies
\(s_P^\ast\oplus s_P^g\in\mathcal K\), so
\[
  \mathcal X_P=s_P^\ast\oplus\mathcal K,
  \qquad
  \mathcal I_P(s^\ast)=\mathcal K.
\]
This completes the proof.
\end{proof}

\begin{theorem}[Rank-count formula]
\label{thm:rank-count}
Under the hypotheses of Theorem~\ref{thm:generic-completeness}, the complete
set of feasible branch codes is
\[
  \mathcal X\cong
  (s_P^\ast\oplus\mathcal K)\times\mathbb F_2^{B_F},
\]
and its cardinality is
\[
  |\mathcal X|=
  2^{|B_F|+
  \rank_{\mathbb F_2}
  \left[\begin{smallmatrix}\mathsf M\\\mathsf V\end{smallmatrix}\right]
  -\rank_{\mathbb F_2}(\mathsf V)}.
\]
In particular, when \(E_P=\varnothing\), this reduces to
\(|\mathcal X|=2^{n-K}\).
\end{theorem}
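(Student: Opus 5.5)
The plan is to combine three earlier results: the free-bit factorization (Proposition~\ref{prop:free-bits}), generic completeness (Theorem~\ref{thm:generic-completeness}), and the dimension count for \(\mathcal K\) (Lemma~\ref{lem:rank-formula}).

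First, fix a feasible reference code \(s^\ast\). If none exists, the rank formula cannot hold, since it is a positive power of two. So the hypotheses are read as supplying one. Under Assumption~\ref{ass:gp} this is automatic: the generic feasible realization, after seed normalization, is a lateration embedding with all pruning edges satisfied. By Lemma~\ref{lem:branch-parametrization} it has a code, and its constrained part lies in \(\mathcal X_P\). When \(E_P=\varnothing\), the zero code on \(B_P=\varnothing\) serves trivially.

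Next, Proposition~\ref{prop:free-bits} gives the bijection \(\mathcal X\cong\mathcal X_P\times\mathbb F_2^{B_F}\), realized by restriction to \(B_P\) and \(B_F\). Theorem~\ref{thm:generic-completeness} replaces \(\mathcal X_P\) by the coset \(s_P^\ast\oplus\mathcal K\). Together these yield the stated product description.

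For the count, translation by \(s_P^\ast\) is a bijection of \(\mathbb F_2^{B_P}\), so \(|s_P^\ast\oplus\mathcal K|=|\mathcal K|=2^{\dim\mathcal K}\). Lemma~\ref{lem:rank-formula} then gives \(\dim\mathcal K=\rank[\begin{smallmatrix}\mathsf M\\\mathsf V\end{smallmatrix}]-\rank\mathsf V\). Multiplying by \(2^{|B_F|}\) gives the formula.

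For the special case \(E_P=\varnothing\), we have \(B_P=\varnothing\), \(\mathcal K=\{0\}\), and \(B_F=B\). Hence \(|\mathcal X|=2^{|B|}=2^{n-K}\). The matrices \(\mathsf M\) and \(\mathsf V\) are then empty, so both ranks are zero under the convention that an empty matrix has rank zero.

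The only real subtlety is the existence of the reference code \(s^\ast\) and its compatibility with the convention for the empty case. Everything else is direct substitution of the earlier results.
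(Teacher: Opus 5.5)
Your proposal is correct and is essentially the paper's proof, which simply combines Theorem~\ref{thm:generic-completeness}, Proposition~\ref{prop:free-bits}, and Lemma~\ref{lem:rank-formula}. Your additional remarks make explicit two points the paper leaves tacit: that Assumption~\ref{ass:gp} supplies a feasible reference code (used in the paper as \(s_P^g\)), and that both ranks vanish for the empty matrices when \(E_P=\varnothing\).
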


\begin{proof}
Combine Theorem~\ref{thm:generic-completeness},
Proposition~\ref{prop:free-bits}, and Lemma~\ref{lem:rank-formula}.
\end{proof}

%% file: sections/example.tex
\section{Worked Example and Computational Check}
\label{sec:example}

Consider a planar instance (\(K=2\)) with seed
\(V_0=\{v_1,v_2\}\), predecessor sets
\[
 C_3=C_7=\{v_1,v_2\},\quad
 C_4=C_5=\{v_1,v_3\},\quad
 C_6=\{v_1,v_4\},
\]
and pruning edges
\[
 E_P=\bigl\{\{v_3,v_6\},\{v_4,v_5\}\bigr\}.
\]
The predecessor closure gives
\(V_P=\{v_1,\ldots,v_6\}\),
\(B_P=\{v_3,v_4,v_5,v_6\}\), and \(B_F=\{v_7\}\).

Write \(C_0=\{v_1,v_2\}\), \(C_1=\{v_1,v_3\}\), and
\(C_2=\{v_1,v_4\}\). The relevant components, listed with their mirror
labels, are
\[
 (C_0,\{v_3,v_4,v_5,v_6\}),\quad
 (C_1,\{v_4,v_6\}),\quad
 (C_1,\{v_5\}),\quad
 (C_2,\{v_6\}).
\]
Ordering rows by \((v_3,v_4,v_5,v_6)\) and columns as displayed above gives
\[
 \mathsf M=
 \begin{bmatrix}
 1&0&0&0\\
 1&1&0&0\\
 1&0&1&0\\
 1&1&0&1
 \end{bmatrix}.
\]
Only two labeled violation rows are nonzero: the row
\((\{v_4,v_5\},C_1)\) and the row \((\{v_3,v_6\},C_2)\). Thus
\[
 \mathsf V=
 \begin{bmatrix}
 0&1&1&0\\
 0&0&0&1
 \end{bmatrix}.
\]
It follows that \(\rank\mathsf V=2\) and
\(\rank[\begin{smallmatrix}\mathsf M\\\mathsf V\end{smallmatrix}]=4\).
The compatible constrained shifts are
\[
 \mathcal K=\left\{
 (0,0,0,0)^\mathsf T,
 (1,1,1,1)^\mathsf T,
 (0,1,1,1)^\mathsf T,
 (1,0,0,0)^\mathsf T
 \right\}.
\]
Therefore \(\dim\mathcal K=2\), and the free decision at \(v_7\) doubles
the four constrained codes:
\[
 |\mathcal X|=2^{1+4-2}=8.
\]

%% file: sections/conclusion.tex
\section{Conclusion}
\label{sec:conclusion}

We derived an exact, dimension-uniform count for generic feasible skeletal
CDDGP instances. The construction separates branch decisions outside the
predecessor closure of pruning endpoints, which remain free, from constrained
decisions governed by partial reflections. Relevant components of the
lateration skeleton provide binary masks, while the labeled violation matrix
records the pruning edges incompatible with each mirror. Soundness holds on
the full strict-discretization parameter domain; generic completeness follows
from the partial-reflection description of equivalent realizations of
\(K\)-joined graphs. Together they identify the constrained feasible codes
with one affine translate of \(\mathsf M(\ker\mathsf V)\).

The resulting formula replaces exhaustive traversal of the binary lateration
tree by graph operations and ranks over \(\mathbb F_2\). Its scope is also
explicit: the skeletal clique condition separates lateration from pruning,
SD excludes collapsed branches, and GP rules out nongeneric coincidences not
explained by graph separators. These hypotheses make the formula suitable as
a certifiable counting step whenever a generic feasible representative is
available.

Several algorithmic questions remain. Sparse construction and elimination for
the labeled matrices should be evaluated on large molecular and localization
instances. It would also be useful to characterize nongeneric exceptional
instances, to test genericity without symbolic coordinates, and to determine
which weaker structural assumptions retain an affine feasible-code space.

%% file: references.bib
@article{lavor2012discretizable,
  title={The discretizable molecular distance geometry problem},
  author={Lavor, Carlile and Liberti, Leo and Maculan, Nelson and Mucherino, Antonio},
  journal={Computational Optimization and Applications},
  volume={52},
  number={1},
  pages={115--146},
  year={2012},
  doi={10.1007/s10589-011-9402-6},
  publisher={Springer}
}

@article{abud2024impossible,
  title={An impossible combinatorial counting method in distance geometry},
  author={Abud, Germano and Alencar, Jorge and Lavor, Carlile and Liberti, Leo and Mucherino, Antonio},
  journal={Discrete Applied Mathematics},
  volume={354},
  pages={83--93},
  year={2024},
  doi={10.1016/j.dam.2024.02.018},
  publisher={Elsevier}
}

@article{mucherino2012exploiting,
  title={Exploiting symmetry properties of the discretizable molecular distance geometry problem},
  author={Mucherino, Antonio and Lavor, Carlile and Liberti, Leo},
  journal={Journal of Bioinformatics and Computational Biology},
  volume={10},
  number={03},
  pages={1242009},
  year={2012},
  doi={10.1142/S0219720012420097},
  publisher={World Scientific}
}

@inproceedings{liberti2011number,
  title={On the number of solutions of the discretizable molecular distance geometry problem},
  author={Liberti, Leo and Masson, Beno{\^\i}t and Lee, Jon and Lavor, Carlile and Mucherino, Antonio},
  booktitle={International Conference on Combinatorial Optimization and Applications},
  pages={322--342},
  year={2011},
  doi={10.1007/978-3-642-22616-8_26},
  organization={Springer}
}

@article{mucherino2012discretizable,
  title={The discretizable distance geometry problem},
  author={Mucherino, Antonio and Lavor, Carlile and Liberti, Leo},
  journal={Optimization Letters},
  volume={6},
  number={8},
  pages={1671--1686},
  year={2012},
  doi={10.1007/s11590-011-0358-3},
  publisher={Springer}
}

@article{goncalves2021new,
  title={A new algorithm for the {$K$}-{DMDGP} subclass of distance geometry problems with exact distances},
  author={Gon{\c{c}}alves, Douglas S and Lavor, Carlile and Liberti, Leo and Souza, Michael},
  journal={Algorithmica},
  volume={83},
  number={8},
  pages={2400--2426},
  year={2021},
  doi={10.1007/s00453-021-00835-6},
  publisher={Springer}
}

@article{cassioli2015discretization,
  title={Discretization vertex orders in distance geometry},
  author={Cassioli, Andrea and G{\"u}nl{\"u}k, Oktay and Lavor, Carlile and Liberti, Leo},
  journal={Discrete Applied Mathematics},
  volume={197},
  pages={27--41},
  year={2015},
  doi={10.1016/j.dam.2014.08.035},
  publisher={Elsevier}
}

@article{liberti2014number,
  title={On the number of realizations of certain Henneberg graphs arising in protein conformation},
  author={Liberti, Leo and Masson, Beno{\^\i}t and Lee, Jon and Lavor, Carlile and Mucherino, Antonio},
  journal={Discrete Applied Mathematics},
  volume={165},
  pages={213--232},
  year={2014},
  doi={10.1016/j.dam.2013.01.020},
  publisher={Elsevier}
}

@article{liberti2008branch,
  title={A branch-and-prune algorithm for the molecular distance geometry problem},
  author={Liberti, Leo and Lavor, Carlile and Maculan, Nelson},
  journal={International Transactions in Operational Research},
  volume={15},
  number={1},
  pages={1--17},
  year={2008},
  doi={10.1111/j.1475-3995.2007.00622.x},
  publisher={Wiley Online Library}
}

@incollection{mucherino2020analysis,
  title={An analysis on the degrees of freedom of binary representations for solutions to discretizable distance geometry problems},
  author={Mucherino, Antonio},
  booktitle={Recent Advances in Computational Optimization},
  pages={251--255},
  year={2021},
  doi={10.1007/978-3-030-82397-9},
  publisher={Springer}
}

@article{abud2018k,
  title={The {K}-discretization and {K}-incident graphs for discretizable Distance Geometry},
  author={Abud, Germano and Alencar, Jorge and Lavor, Carlile and Liberti, Leo and Mucherino, Antonio},
  journal={Optimization Letters},
  volume={14},
  number={2},
  pages={1--14},
  year={2018},
  doi={10.1007/s11590-018-1294-2}
}

@inproceedings{liberti2013counting,
  title={Counting the number of solutions of kDMDGP instances},
  author={Liberti, Leo and Lavor, Carlile and Alencar, Jorge and Abud, Germano},
  booktitle={International Conference on Geometric Science of Information},
  pages={224--230},
  year={2013},
  doi={10.1007/978-3-642-40020-9_23},
  organization={Springer}
}

@inproceedings{yemini1978positioning,
  title={The positioning problem—a draft of an intermediate summary},
  author={Yemini, Yechiam},
  booktitle={Proceedings of the Conference on Distributed Sensor Networks},
  pages={137--145},
  year={1978},
  organization={sn}
}

@article{liberti2020distance,
  title={Distance geometry and data science},
  author={Liberti, Leo},
  journal={Top},
  volume={28},
  number={2},
  pages={271--339},
  year={2020},
  doi={10.1007/s11750-020-00563-0},
  publisher={Springer}
}

@article{garamvolgyi2024partial,
  author={Garamv{\"o}lgyi, D{\'a}niel and Jord{\'a}n, Tibor},
  title={Partial Reflections and Globally Linked Pairs in Rigid Graphs},
  journal={SIAM Journal on Discrete Mathematics},
  volume={38},
  number={3},
  pages={2005--2040},
  year={2024},
  doi={10.1137/23M157065X}
}
